\newcommand{\C} {\mathbb{C}}
\newcommand{\Q} {\mathbb{Q}}
\newcommand{\F}{\mathbb{F}}
\newcommand{\Z}{\mathbb{Z}}
\newcommand{\OO}{\mathcal{O}}
\newcommand{\PP}{\mathbb{P}}
\newcommand{\NS}{\mathop{\rm NS}}
\newcommand{\MW}{\mathop{\rm MW}}
\newcommand{\disc}{\mathop{\rm disc}}
\newcommand{\A}{\mathbb{A}}
\newcommand{\Pic}{\mathop{\rm Pic}}
\newcommand{\mF}{\mathcal F}
\newtheorem{Theorem}{Theorem}[section]
\newtheorem{Proposition}[Theorem]{Proposition}
\newtheorem{Lemma}[Theorem]{Lemma}
\theoremstyle{remark}
\newtheorem{Fact}[Theorem]{Fact}
\newtheorem{Remark}[Theorem]{Remark}
\newtheorem{Example}[Theorem]{Example}
\theoremstyle{definition}
\begin{document}

\title{112 lines on smooth quartic surfaces (characteristic 3)}

%\dedicatory{Dedicated to Wolf Barth}

\author{S\l awomir Rams}
\address{Institute of Mathematics, Jagiellonian University, 
ul. {\L}ojasiewicza 6,  30-348 Krak\'ow, Poland}
\address{Current address: 
Institut f\"ur Algebraische Geometrie, Leibniz Universit\"at
  Hannover, Welfengarten 1, 30167 Hannover, Germany} 
\email{slawomir.rams@uj.edu.pl}

\author{Matthias Sch\"utt}
\address{Institut f\"ur Algebraische Geometrie, Leibniz Universit\"at
  Hannover, Welfengarten 1, 30167 Hannover, Germany}
\email{schuett@math.uni-hannover.de}

%\subjclass[2010]{14J28; 14G10, 14J27, 14J50}
%%
%\keywords{K3 surface, wild automorphism, Lefschetz fixed point formula}
%%
%%
\thanks{Funding   by ERC StG~279723 (SURFARI) and  NCN grant N N201 608040 (S.~Rams)
 is gratefully acknowledged.}

\date{May 6, 2015}

\begin{abstract}
Over a field $k$ of characteristic $3$,
 we prove 
 that there are no smooth quartic surfaces $S \subset \PP^3_k$ 
 with more than 112 lines.
 Moreover, the surface with 112 lines is projectively equivalent over $\bar k$ to the Fermat quartic.
 As a key ingredient, we derive a characteristic free upper bound
 for the number of lines met by a conic on a smooth quartic surface.
\end{abstract}
%
%
% \begin{abstract}
%This paper concerns K3 surfaces with automorphisms of order 11 in arbitrary characteristic.
%Specifically we study the wild case and prove that a general such surface in characteristic 11
%has Picard number 2.
%We also construct K3 surfaces with an automorphism of order 11 in every characteristic,
%and supersingular K3 surfaces whenever possible.
% \end{abstract}
%% 
 \maketitle

 \section{Introduction}
 \label{s:intro}
 
 The study of lines on surfaces in $\PP^3$ is a classical topic in algebraic geometry,
 starting from the prototype 27 lines on a smooth cubic.
For higher degree, however, 
smooth surfaces do generically not contain any lines at all,
so one can only aim for upper bounds on their number.
In \cite{RS} we derived the sharp upper bound of 64 lines
for smooth quartics over fields of characteristic other than $2,3$,
correcting Segre's arguments \cite{Segre} over $\C$.
Notably, our proof used elliptic fibrations and the flecnodal divisor (see Section \ref{s:Hess})
both of which may degenerate in characteristics $2, 3$.
The aim of this paper is to cover these two remaining characteristics.
To this end, we derive the following characteristic-free bound
for conics on smooth quartic surfaces
which will serve as a replacement of the flecnodal divisor:

\begin{Theorem}
\label{thm:Q}
A geometrically irreducible conic on a smooth quartic $S\subset\PP^3$
may meet at most 48 lines on $S$.
\end{Theorem}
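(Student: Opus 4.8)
The plan is to count the lines through the points of $C$ by a contact computation \emph{along $C$}, playing the role that the flecnodal divisor plays on the whole surface in \cite{RS}. First I would reduce to lines not lying in the plane $H=\langle C\rangle$. Since $S\cap H$ is a plane quartic containing the irreducible conic $C$, the residual curve $D$ with $S\cap H=C+D$ is again a conic; hence at most two lines of $S$ lie in $H$ (the linear components of $D$), and these automatically meet $C$. Any line $\ell\subset S$ with $\ell\not\subset H$ satisfies $\ell\cap C\subseteq\ell\cap H=\{P\}$, a single point, and since the tangent direction of $\ell$ at $P$ leaves $H$ while that of $C$ lies in $H$, the intersection is transverse, so $\ell\cdot C=1$. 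Thus it suffices to bound the lines of $S$ meeting $C$ in a single point off $H$, each counted once.

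Next I would set up the contact conditions. Write $C\cong\PP^1$, $t\mapsto c(t)$ with $c$ of degree $2$, let $f$ be the defining quartic of $S$, and for a direction $v$ consider the expansion
\[
 f\bigl(c(t)+s\,v\bigr)=\sum_{i=0}^{4} g_i(t,v)\,s^{\,i},
\]
where $g_0=f(c(t))\equiv 0$. The line through $c(t)$ in direction $v$ lies on $S$ exactly when $g_1=g_2=g_3=g_4=0$. Here $g_i$ is homogeneous of degree $2(4-i)$ in $t$ and of degree $i$ in $v$, and the conditions carry a transparent geometric meaning: $g_1=0$ cuts out the tangent plane $T_{c(t)}S$, $g_2=0$ is the second fundamental form selecting the (at most two) asymptotic directions, $g_3=0$ imposes contact of order $\ge 3$, while $g_4=f(v)=0$ becomes automatic once the line lies on $S$.

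I would then pass to the ruled surface $\mathcal F\to C$ whose fibre over $P=c(t)$ is the line $\PP(T_PS)$ of tangent directions, i.e. work modulo $g_1=0$ and modulo the scaling that produces the spurious locus $\{v=c(t)\}$, on which every $g_i$ vanishes identically. On $\mathcal F$ the asymptotic condition $g_2=0$ is a bisection of the ruling and $g_3=0$ a further divisor; a line of $S$ through a point of $C$ corresponds to a point of $\{g_2=0\}\cap\{g_3=0\}$, the vanishing of $g_4$ being a consequence. Computing this intersection number on $\mathcal F$ from the bidegrees above, subtracting the contribution of the degenerate diagonal $\{v=c(t)\}$, and adding the at most two lines contained in $H$, should yield the bound $48$ --- morally the two asymptotic sheets over $C$ each carrying a degree-$24$ contact divisor.

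The main obstacle is to carry out this count rigorously and, crucially, characteristic-freely. I expect the delicate points to be: the exact invariants of $\mathcal F$ and of the line bundles twisting $g_2,g_3$, so that the raw intersection number is correct; the precise excess contribution of the spurious locus $\{v=c(t)\}$ and of any further base locus, which must be computed and removed; excluding positive-dimensional components of $\{g_2=g_3=0\}$, which would signal a pencil of lines through a point and must be ruled out on the smooth quartic $S$; and above all the behaviour in characteristics $2$ and $3$, where the second fundamental form and the associated asymptotic data degenerate or become inseparable --- the very reason the flecnodal divisor of \cite{RS} is unavailable. Controlling these degenerations (for instance when $g_2$ drops rank along $C$, or when two asymptotic directions collide inseparably), and ensuring that a point of $C$ carrying two distinct lines is counted with the correct multiplicity, is where the real work lies.
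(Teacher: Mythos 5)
Your setup identifies the right geometric data --- the tangent plane condition $g_1$ and the second fundamental form $g_2$ along the conic are exactly the paper's $T_P$ and Hessian quadric $V_P$ --- but the counting mechanism you propose is not the paper's, and as it stands it has a genuine gap: you never actually produce the number $48$, and the obstacles you list at the end (excess intersection along $\{v=c(t)\}$, positive-dimensional components, degeneration of $g_2$ in characteristics $2$ and $3$) are precisely the points where a direct intersection count on the ruled surface $\mathcal F$ breaks down, with no mechanism offered to control them. A raw computation of $\{g_1=g_2=g_3=0\}$ in $\PP^1_t\times\PP^3_v$ gives a curve of class $52\,\alpha\beta^2+6\,\beta^3$, from which one must subtract the diagonal with its (unknown) multiplicity and the horizontal ``asymptotic'' components of unknown degree; there is no a priori reason this leftover is bounded by $48$, and the heuristic ``two sheets times $24$'' is not substantiated. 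Note also that your parenthetical ``the vanishing of $g_4$ being a consequence'' is false: $g_1=g_2=g_3=0$ only gives $4$-contact lines, most of which do not lie on $S$ --- harmless for an upper bound, but a sign that the locus being counted is larger and messier than intended.

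The missing idea in the paper is a two-sided bound that makes all of these degenerations irrelevant. One drops $g_3$ entirely, forms the union $Z=\bigcup_{P\in Q}(T_P\cap V_P)\subset\PP^3$, and computes via the resultant of $h$ (bidegree $(6,1)$) and $q_P$ (bidegree $(4,2)$) that $Z$ has degree $16$; the only case analysis needed is to check that $Z$ is actually a surface not containing $S$ (this is where characteristics $2$ and $3$ require separate, but elementary, arguments). Then $D=Z\cap S\in|\OO_S(16)|$ has degree $64$, every line meeting $Q$ lies in $D-mQ$ where $m$ is the multiplicity of $Q$ in $D$, and the number of such lines is bounded both by $\deg(D-mQ)=64-2m$ and by $Q.(D-mQ)=32+2m$. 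The minimum of these two is at most $48$ for every $m$. It is exactly this interplay --- the worse the degeneration (large $m$), the better the first bound --- that absorbs all the excess-intersection and characteristic issues you flag; without it, or some substitute for it, your plan does not close.
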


The proof of Theorem \ref{thm:Q} will be given in Section \ref{s:Hess} using the Hessian quadric.
We should like to note that similar techniques apply to surfaces of higher degree
as we will exploit in future work.
Presently, we will concentrate on the consequences for characteristics $2$ and $3$.
Our main theorem is as follows:

\begin{Theorem}
\label{thm}
Let $k$ be a field of characteristic $3$.
Then any smooth quartic surface over $k$ contains at most 112 lines.
\end{Theorem}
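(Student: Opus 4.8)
The plan is to bound the number of lines on a smooth quartic $S$ over a field $k$ of characteristic $3$ by combining Theorem~\ref{thm:Q} with a careful analysis of the incidence structure among the lines themselves. Each line $\ell\subset S$ lies on a quartic, so by adjunction it is a smooth rational curve with self-intersection $\ell^2=-2$ in $\NS(S)$. The degree of $S$ forces every plane through $\ell$ to cut out a residual cubic, and decomposing that cubic controls how many other lines meet $\ell$. More precisely, I would first establish that a generic line meets only a bounded number of others, and identify the special lines of high incidence as the main contributors; these play the role analogous to the flecnodal configuration in \cite{RS}.

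The key geometric input I would exploit is that any plane $H\supset\ell$ meets $S$ in $\ell$ plus a residual plane cubic $C_H$; the lines meeting $\ell$ arise as components of such residual cubics, while conics arise from the cases where $C_H$ splits as a line plus a conic or as three lines. Here Theorem~\ref{thm:Q} enters decisively: whenever a residual conic appears, it can meet at most $48$ lines on $S$, which caps the contribution of each such configuration and prevents the line count from blowing up. First I would stratify the lines according to the splitting type of the pencil of residual cubics through them, bounding the number of lines in each stratum, and then sum these contributions. A lattice-theoretic constraint on $\NS(S)$ (of rank at most $22$, and in fact governed by the Picard number of the K3 surface $S$) gives a complementary global bound that rules out too many mutually incident lines.

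The step I expect to be the main obstacle is controlling the \emph{quasi-elliptic} degeneration peculiar to characteristic $3$: the fibrations one would naively use to organize the lines may fail to be elliptic and instead acquire a cuspidal generic fiber, so the Shioda--Tate and Mordell--Weil arguments from \cite{RS} do not transfer verbatim. I would handle this by replacing the flecnodal-divisor argument entirely with the conic bound of Theorem~\ref{thm:Q}, using conics (genus-zero curves of degree two on $S$) as the combinatorial backbone instead of the flecnodal curve. The delicate point is to ensure that enough conics are available on $S$ and to verify that the incidence bounds combine to yield exactly $112$ rather than a weaker estimate; this is where the sharpness must be checked against the Fermat quartic.

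Finally, to pin down the extremal case I would argue that equality in the combined bounds forces the configuration of $112$ lines to be highly symmetric, matching the known configuration on the Fermat quartic; a deformation or lattice-rigidity argument then identifies $S$ with the Fermat quartic up to projective equivalence over $\bar k$, as asserted in the abstract. The bulk of the work is the careful case analysis of residual-cubic splitting types together with the characteristic-$3$-specific verification that no quasi-elliptic pathology allows the bound to be exceeded.
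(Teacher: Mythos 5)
Your outline assembles the right ingredients in the wrong proportions, and the step you defer --- ``the delicate point is to ensure that the incidence bounds combine to yield exactly $112$'' --- is precisely the missing quantitative core. The paper's bound of $112$ does \emph{not} come from Theorem~\ref{thm:Q} at all: it comes from the case in which some hyperplane section splits into four lines $\ell_1+\ell_2+\ell_3+\ell_4$, where every further line must meet one of the $\ell_i$, and the count is $4+4\cdot(30-3)=112$. The number $30$ here is the content of Lemma~\ref{lem:30}: for the genus one fibration $\pi$ induced by a line, one computes $e(S)=24$ fibre by fibre, and in the quasi-elliptic case (where $e(F)=2$ for the generic fibre, so $\sum_v(e(F_v)-2)=20$) the only admissible reducible Kodaira types in characteristic $3$ are $IV$-stars, giving exactly $10$ stars and hence $30$ incident lines. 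You identify the quasi-elliptic degeneration as the main obstacle but propose to ``replace the flecnodal-divisor argument entirely with the conic bound,'' i.e.\ to route everything through conics. That cannot work: in the extremal configuration (the Fermat quartic) the relevant hyperplane sections are unions of four lines with no conic present, so Theorem~\ref{thm:Q} says nothing about them, and no bound of the form ``each conic meets at most $48$ lines'' can by itself produce $112$.

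In the paper, Theorem~\ref{thm:Q} plays only an auxiliary role, in the complementary case where no hyperplane splits into four lines. There one first shows each line meets at most $12$ others (since incident lines then lie in $I_2$ or $III$ fibres, forcing $\pi$ to be elliptic, and $e(S)=24$ caps the count), and the conic bound of $48$ closes the subcase $H=\ell_1+\ell_2+Q$ with the much smaller total $\leq 70$; the skew case is handled by the signature of $\Pic(S)$, as you correctly note. So what your proposal lacks is the Euler--Poincar\'e/Kodaira-type analysis of Lemma~\ref{lem:30} --- the one place where the characteristic-$3$ quasi-elliptic phenomenon is actually confronted rather than avoided --- together with the case split on whether a hyperplane section degenerates into four lines. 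Without that lemma there is no path from your stated ingredients to the number $112$.
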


We will prove Theorem \ref{thm} in Section \ref{s:pf}.
Let us point out that Theorem \ref{thm} is actually sharp as the bound is attained by the Fermat quartic
(see Example \ref{ex:Fermat}).
Section \ref{s:unique} details  a proof that the Fermat surface is indeed uniquely determined 
by the number of lines on it up to projective transformations over $\bar k$
(compare the uniqueness over $\F_9$ in \cite{HS}).
The paper concludes with a sketch of the proof of the corresponding bound for characteristic $2$:

\begin{Proposition}
\label{prop:2}
Let $k$ be a field of characteristic $2$.
Then any smooth quartic surface over $k$ contains at most 84 lines.
\end{Proposition}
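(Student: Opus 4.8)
The plan is to adapt the proof of Theorem \ref{thm}, replacing its characteristic-$3$ input by the characteristic-$2$ analogue and redoing the combinatorics. As there, I would fix a line $\ell\subset S$ and study the pencil of planes $\{H_t\}_{t\in\PP^1}$ through $\ell$, cutting out residual cubics by $S\cap H_t=\ell+C_t$. Every line of $S$ meeting $\ell$ appears as a component of a reducible $C_t$, while every line disjoint from $\ell$ defines a section of the induced genus-one fibration $\pi_\ell\colon\tilde S\to\PP^1$ (after resolving the base locus): indeed such a line meets each residual cubic in exactly one point. The two quantities to control are thus (i) the number of lines meeting a fixed line and (ii) the number disjoint from it, and the value $84$ should emerge from optimising their interplay over all choices of $\ell$. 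It is worth recording at the outset that, in contrast to characteristic $3$, the Fermat quartic degenerates to a non-reduced plane in characteristic $2$ (since $x^4+y^4+z^4+w^4=(x+y+z+w)^4$), so the extremal configuration is genuinely different and need not be realised by a single surface.

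For step (i) I would bound the lines through $\ell$ by analysing how the cubics $C_t$ split. When $C_t$ breaks as a conic plus a line, the conic is a geometrically irreducible conic on $S$, so Theorem \ref{thm:Q} caps the number of lines it meets by $48$; this is precisely the role played by the flecnodal divisor in characteristic $0$, which is unavailable here. The remaining degenerations, where $C_t$ splits into three coplanar lines, are handled directly: each such plane contributes at most three lines through $\ell$, and the number of such planes is controlled by the Euler number $e(S)=24$ via the reducible fibres of $\pi_\ell$. As in \cite{RS} one must separate lines of the first and second kind, bounding the incidences of a line of the first kind by a direct count while treating second-kind lines individually.

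For step (ii) the disjoint lines are sections of $\pi_\ell$, and their number is constrained lattice-theoretically. Since the N\'eron--Severi lattice of a \Kthree\ surface in characteristic $2$ has rank $\rho\le 22$, a Shioda--Tate analysis together with the constraint that the lines span a hyperbolic sublattice of $\NS(S)$ of bounded determinant limits the sections, and feeding this into the fibre count and performing the extremal combinatorial optimisation as for Theorem \ref{thm} yields $84$. The delicate arithmetic point is that the jump to $\rho=22$ for supersingular surfaces enlarges these bounds relative to the value $64$ obtained when $\rho\le 20$, so the optimisation, and in particular the bound on the number of second-kind lines, must be recomputed from scratch.

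The main obstacle is genuinely characteristic-$2$ in nature. First, the pencil of planes through $\ell$ may induce a \emph{quasi-elliptic} rather than an elliptic fibration, a phenomenon occurring only in characteristics $2$ and $3$; then $\pi_\ell$ has a cuspidal generic fibre, every fibre is additive, and the usual Mordell--Weil and section theory must be replaced by that of quasi-elliptic fibrations, whose cusps sweep out a curve on $\tilde S$. Closely related is the degeneration of the Gauss map, which becomes inseparable in characteristic $2$, so the flecnodal locus may fill out $S$ and a large supply of lines can be of the second kind. Controlling this proliferation, and thereby pinning down the extremal configurations that force the bound $84$, is where the bulk of the work will lie.
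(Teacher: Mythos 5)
Your outline points in the right general direction (adapt the characteristic-$3$ argument, use the Euler number and quasi-elliptic fibre types to bound incidences with a fixed line, and invoke Theorem \ref{thm:Q} for conics), but two essential steps are missing and, as sketched, the numbers do not come out to $84$. First, the per-line bound is never established. The paper's proof hinges on Lemma \ref{lem:20}: in characteristic $2$ a line meets at most $20$ others, because a quasi-elliptic fibration admits only fibres of types $II, III, I_n^*, III^*, II^*$, smoothness of the quartic rules out the last three, and then $24 = 4 + \sum_v (e(F_v)-2)$ forces exactly $20$ type $III$ fibres, each carrying one line. Your sketch instead speaks of planes containing three coplanar lines through $\ell$ controlled by $e(S)=24$; but in the quasi-elliptic case in characteristic $2$ there are no $I_3$ or $IV$ fibres at all, so the characteristic-$3$ count of ``triangles and stars'' does not transfer and the fibre-type analysis has to be redone. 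Your step (ii), a Shioda--Tate/lattice bound on sections, is neither carried out nor needed: once some hyperplane splits as $\ell_1+\ell_2+Q$ (or into four lines), every other line has degree $1$ against the hyperplane class and therefore meets one of the listed components, so disjoint lines never have to be counted separately.

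Second, and more seriously, the constant $84$ is out of reach with the ingredients you list. In the decisive case $H=\ell_1+\ell_2+Q$ the count is $2+2\cdot(20-1)+\#\{\text{lines}\neq\ell_1,\ell_2 \text{ meeting } Q\}$, and applying Theorem \ref{thm:Q} as stated gives at most $48-2=46$ for the last term, hence only $86$. The paper must sharpen the conic bound in this situation by rerunning the proof of Theorem \ref{thm:Q} with the divisor $\tilde D = D-mQ-\ell_1-\ell_2$ (exploiting $\ell_i.Q=2$), which replaces the competing bounds $64-2m$ and $32+2m$ by $62-2m$ and $28+2m$ and yields at most $44$; only then does $2+38+44=84$ emerge. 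Relatedly, in your step (i) you invoke Theorem \ref{thm:Q} to bound lines \emph{through} $\ell$ when a residual cubic splits as line plus conic; that theorem bounds lines meeting the conic globally, not incidences with $\ell$, so it is the wrong tool there (each such fibre contributes exactly one line meeting $\ell$). Without the refined conic bound and an explicit case analysis on how hyperplane sections split, the ``extremal combinatorial optimisation'' you defer to cannot produce $84$.
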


In contrast to the case of characteristic 3, the latter bound is not known to be sharp.
We hope to remedy this discrepancy in future work.

\section{Hessian quadric}
\label{s:Hess}

The statements of the introduction remain valid if one replaces $k$ by a field extension,
hence we will assume throughout this paper that $k$ is algebraically closed.
Then we fix a smooth quartic surface
\[
S=\{f=0\}\subset\PP^3_k\]
given by a homogeneous irreducible polynomial
$f\in k[x_1,x_2,x_3,x_4]$ of degree $4$.
In this section, we give a proof of Theorem \ref{thm:Q}.
As a motivation, we briefly sketch the meaning of the flecnodal divisor 
as hinted in the introduction.
The flecnodal divisor $\mF$ was a key ingredient of \cite{Segre}, \cite{RS}
and roughly contains all the points on $S$ which admit a 4-contact line in $\PP^3$.
As a scheme,
it is defined as the intersection
\[
\mF = S \cap Y
\]
where $Y\subset\PP^3$ is the total space of the 4-contact lines to $S$ in $\PP^3$:
\[
Y = \cup \{\ell\subset\PP^3 \text{ a line}; \; \ell\subset S \; \text{ or } \; \exists P\in S: \ell.S = 4P\}
\]
It goes back to Clebsch \cite{Clebsch} and Salmon  \cite{Salmon} that
\[
\mF\in H^0(S,\OO_S(20)) \;\;\; \text{ for } k=\C,
\]
but  by \cite[Thm.~1 \& Prop.~1]{Voloch} the same holds outside characteristics $2,3$.
It follows right away that $S$ contains at most 80 lines unless $\mF$ is degenerate
(i.e.~$\mF$ contains all of $S$),
and that an irreducible conic $Q\subset S$ meets at most $40=Q.\mF$ lines on $S$
unless $\mF$ is supported on $Q$.
In the latter case, however, we are not aware of a direct argument 
improving on Theorem \ref{thm:Q},
so although it is mostly aimed towards characteristics $2$ and $3$,
it may also have some independent relevance.

For later reference, we briefly comment on the degenerate case.
There are many equivalent formulations of this; for instance the Gauss map to the dual variety $S^*$
fails to be birational (as is generically the case).
Here is the prototype example in characteristic $3$:

\begin{Example}[Fermat quartic]
\label{ex:Fermat}
Let $k=\bar\F_3$ and consider the Fermat quartic
\[
S_3 = \{x_1^4+x_2^4+x_3^4+x_4^4=0\}\subset\PP^3_k.
\]
Then the Gauss map is purely inseparable,
and following the above discussion the flecnodal divisor degenerates.
Indeed $S_3$ contains 112 lines (each defined over $\F_9$, compare \cite{HS}), see \cite[p.~44]{Segre} or  \cite[\S 6.1]{SSvL}, e.g.,
and hence $\mF$ also has to degenerate from this point of view.
\end{Example}

%\subsection*{Proof of Theorem \ref{thm:Q}}
Given a line $\ell\subset S$ and a point $P\in\ell$,
it is clear that $\ell$ is contained in the tangent plane to $S$ at $P$:
\begin{eqnarray}
\label{eq:T_P}
\ell\subset T_P.
\end{eqnarray}
We will compare this with the Hessian quadric $V_P\subset\PP^3$ defined as follows.
%defined by the homogeneous quadratic form $q\in k[x_1,x_2,x_3,x_4]$
Let   $q_P\in k[x_1,x_2,x_3,x_4]$ denote the homogeneous quadratic form
defined by the Hessian at $P$ in terms of formal derivatives:
\[
q_P(x_1,x_2,x_3,x_4) =  \frac 12\, (x_1,x_2,x_3,x_4) \left(  \frac{\partial^2 f}{\partial x_i \partial x_j}(P)\right)_{1\leq i,j\leq 4} \phantom{l}^t(x_1,x_2,x_3,x_4).
\]
(This gives rise to well-defined quadratic form in characteristic $2$ by spelling out $q_P$ in terms of the coefficients of $f$ over $\Z$.) Then
\[
V_P = \{q_P=0\}\subset\PP^3.
\]
A local computation reveals that a line $\ell\subset\PP^3$ is contained in $V_P$
if $\ell$ meets $S$ with contact at least 3 at $P$.
In particular, this shows that the intersection
$
T_P \cap V_P
$
generically consists of the two 3-contact lines of $S$ at $P$,
and if $\ell\subset S$ contains $P$,
then
\begin{eqnarray}
\label{eq:V_P}
\ell\subset V_P.
\end{eqnarray}
We shall now specialise to the situation where $S$ contains an irreducible conic $Q$.
Then at any point $P\in Q$, we can form the intersection $T_P\cap V_P$.
We are interested in the total space
\[
Z = Z_Q = \cup_{P\in Q} (T_P\cap V_P) \subset \PP^3.
\]

\begin{Proposition}
\label{prop:Z}
$Z$ is a surface of degree $16$ in $\PP^3$.
It does not contain $S$,
but the surface $Z$ contains
any line in $S$ meeting $Q$.
\end{Proposition}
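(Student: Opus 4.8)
The plan is to parametrise the conic as $P=P(t)$, $t\in\PP^1$ (an irreducible conic is smooth, hence rational), and to treat the three assertions in turn.

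For the degree I would intersect $Z$ with a general line $L\subset\PP^3$, written as $x=x(s)$ with $x(s)$ linear in $s\in\PP^1$. A point $x(s)$ lies on $Z$ precisely when some $t$ satisfies $x(s)\in T_{P(t)}\cap V_{P(t)}$, i.e.\ when $(t,s)$ solves
\[
\sum_{i}\frac{\partial f}{\partial x_i}(P(t))\,x_i(s)=0
\qquad\text{and}\qquad
q_{P(t)}(x(s))=0 .
\]
As $P(t)$ is quadratic in $t$ and $f$ is a quartic, the first equation is bihomogeneous of bidegree $(6,1)$ and the second of bidegree $(4,2)$ on $\PP^1_t\times\PP^1_s$. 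For general $L$ they meet transversally, so by B\'ezout on $\PP^1\times\PP^1$ there are $6\cdot2+4\cdot1=16$ solutions; since the parametrisation of $Z$ by the conics $T_P\cap V_P$ is generically injective, these give $16$ distinct points of $L\cap Z$, whence $\deg Z=16$. The same description (a one-parameter family of conics lying in the moving planes $T_P$) shows that $Z$ is genuinely two-dimensional.

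The assertion $S\not\subseteq Z$ is the main obstacle, since it is exactly the point where one must rule out that the swept conics cover $S$. I would argue by dimension on the incidence variety $W=\{(t,x)\in\PP^1\times S:\ x\in T_{P(t)}\cap V_{P(t)}\}$, whose image in $S$ is $Z\cap S$. If $S\subseteq Z$ then, as the sweeping map onto $Z$ is generically finite, $W$ would be two-dimensional, so the general fibre of $W\to Q$ would be positive-dimensional; that is, $T_P\cap V_P\subseteq S$ for general $P\in Q$. But $T_P\cap V_P$ consists generically of the two $3$-contact lines of $S$ at $P$, and such a line genuinely meets $S$ in a divisor $3P+x'$ and is therefore not contained in $S$ (if it were for the general $P\in Q$, then $S$ would carry a one-dimensional family of lines, contradicting the finiteness of lines on a smooth quartic). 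Hence the general fibre of $W\to Q$ is finite, $\dim W\le1$, and $Z\cap S$ is a curve; in particular $S\not\subseteq Z$.

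The last assertion is then immediate from \eqref{eq:T_P} and \eqref{eq:V_P}. If $\ell\subset S$ is a line meeting $Q$ in a point $P_0$, then $\ell\subset T_{P_0}$ by \eqref{eq:T_P} (as $P_0\in\ell$) and $\ell\subset V_{P_0}$ by \eqref{eq:V_P} (as $\ell\subset S$ passes through $P_0$). Since $P_0\in Q$, the conic $T_{P_0}\cap V_{P_0}$ is one of those whose union is $Z$, so $\ell\subset T_{P_0}\cap V_{P_0}\subset Z$.
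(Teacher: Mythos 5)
Your overall architecture matches the paper's: the degree count via B\'ezout on $\PP^1_t\times\PP^1_s$ is the same computation as the paper's resultant of the tangent-plane form $h$ and $q_{P(t)}$ with respect to $t$ (the Sylvester determinant gives $4\cdot 1+6\cdot 2=16$), the exclusion of $S$ by noting that $Z$ is swept out by lines while a smooth quartic carries only finitely many lines is the paper's argument in incidence-variety clothing, and the final claim is handled identically via \eqref{eq:T_P} and \eqref{eq:V_P}.

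There is, however, a genuine gap at the very first step: you take for granted that the two equations really are bihomogeneous of bidegrees $(6,1)$ and $(4,2)$, i.e.\ that $q_{P(t)}$ is not identically zero and that $h$ does not divide it (equivalently, that $V_P\neq\PP^3$ and $T_P\not\subset V_P$ for generic $P\in Q$). If $q_{P(t)}\equiv 0$, then $T_P\cap V_P=T_P$ and $Z$ sweeps out all of $\PP^3$; your B\'ezout count, the transversality claim, and the description of $T_P\cap V_P$ as the two $3$-contact lines (on which your second step leans) all collapse. This is not a formality in the characteristics the paper targets: in characteristic $3$ the Hessian of a smooth quartic can vanish identically --- it does for the Fermat quartic $\sum_i x_i^4$, since $\partial^2(x_i^4)/\partial x_i^2=12x_i^2=0$ --- so non-degeneracy must be extracted from the hypothesis that $S$ contains the conic $Q$. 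This is precisely where the paper spends the bulk of its proof: writing the coefficients of $q_{P(t)}$ over $k[t]$, it shows that their identical vanishing forces $f$ into a shape (in characteristic $3$: monomials with some variable at least cubed, plus a multiple of $x_4^2(x_1x_2-x_3^2)$) that is incompatible with $f\in(x_4,\,x_1x_2-x_3^2)$; and a separate degree argument ($\deg_t h=6>4=\deg_t q_{P(t)}$, so a generic inclusion $T_P\subset V_P$ would force $T_P$ to degenerate at some point of $Q$, contradicting smoothness of $S$) rules out $T_P\subset V_P$. Without these two verifications the construction of $Z$ as a surface of degree $16$ is not established, and the proposition remains unproven exactly in the delicate small-characteristic situations it is designed for.
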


\begin{proof}
We start by proving that $Z$ is indeed a surface in $\PP^3$.
To this end, we first have to rule out that $V_P$ degenerates to $\PP^3$ for all $P\in Q$.
After a projective transformation,
we can assume that
\[
Q = \{x_4=x_1x_2-x_3^2=0\}\subset\PP^3.
\]
Consider the affine parametrisation 
\begin{eqnarray*}
\A^1 & \to & \;\;\;\;\;\;\;\;Q\\
t\;\; & \mapsto & P(t) =  [t^2,1,t,0].
\end{eqnarray*}
(We chose the affine parametrisation to simplify our notation.
In what follows, all degrees in $t$ should, however, be understood 
in terms of the homogeneous coordinates of $\PP^1$ parametrising $Q$;
this ensures that there are no degeneracies.)
Writing out
\[
f = \sum_{i,j,m} a_{i,j,m} x_1^ix_2^jx_3^mx_4^{4-i-j-m},\;\;\; a_{i,j,m}\in k,
\]
we can express the coefficients of the quadratic form $q_P$ over $k[t]$ as $P$ ranges over $Q$.
For instance, the coefficient of $x_1^2$ reads
\begin{eqnarray}
\label{eq:coeff}
6a_{4,0,0}t^4+3(a_{3,0,1}t^3+a_{3,1,0}t^2) + a_{2,0,2}t^2+a_{2,1,1}t+a_{2,2,0}.
\end{eqnarray}
If  $V_P$ were to degenerate for all $P\in Q$,
then each of these coefficients of $q_P$ would vanish identically over $k[t]$.
Thus we obtain a plentitude of linear equations in the coefficients of $f$.
If char$(k)\neq 2,3$, then the simultaneous solution directly leads to the conclusion $x_4\mid f$,
giving the required contradiction. 
In characteristics $2,3$, there are fewer equations,
but they still suffice.
For instance, if char$(k)=3$, then $f$ may in the end comprise all monomials
with some coordinate at least cubed and a multiple of $x_4^2(x_1x_2-x_3^2)$.
On the other hand, we know by assumption that
\[
f = x_4\cdot (\text{cubic}) + (x_1x_2-x_3^2)\cdot(\text{quadratic in } x_1, x_2, x_3)
\]
which cannot be matched with the shape prescribed by the indicated monomials.
The analogous argument holds in characteristic $2$
where the multiple of $x_4^2(x_1x_2-x_3^2)$ is supplemented by $x_1^4, x_2^4, x_3^4$ and multiples of $x_4^3$.

Secondly we have to check that generically on $Q$
\begin{eqnarray}
\label{eq:TV}
T_P \not\subset V_P.
\end{eqnarray}
To this end,
we use the linear polynomial $h\in k[t][x_1,\hdots,x_4]$ defining $T_P$ as $P$ ranges over $Q$.
Since $\deg_t(h)=6$ while $\deg_t(q_{P(t)})=4$,
we could only have a generic inclusion $T_P\subset V_P$ if $h$ where to split off a factor in $t$.
But then $T_P$ would degenerate at the roots of this factor,
i.e. $S$ would be singular at $P$, contradiction.

We have seen that the intersection $T_P\cap V_P$ has dimension one except at a finite number of points on $Q$.
Hence the total space $Z$ will be a surface in $\PP^3$.
We can compute a defining polynomial of $Z$ as the resultant $g\in k[x_1,x_2,x_3,x_4]$
of $q_{P(t)}$ and $h$ with respect to $t$.
Using the representation through the determinant of the Sylvester matrix,
we compute the degree of $g$
\[
\deg(g) = \deg_t(q_{P(t)}) \deg_{\underline x}(h) +  
\deg_t(h) \deg_{\underline x}(q_{P(t)}) = 4\cdot 1 + 6\cdot 2 = 16.
\]
Hence $Z$ is given by a polynomial of degree $16$ as claimed.
It may contain different irreducible components,
for instance accounting for the points on $Q$ where \eqref{eq:TV} degenerates,
but by construction each component is covered by a one-dimensional union of lines.
This rules out $S$ as a component of $Z$.

By \eqref{eq:T_P} and \eqref{eq:V_P},
the surface $Z$ contains any line $\ell$ meeting $Q$,
confirming the final claim of Proposition \ref{prop:Z}.
\end{proof}

\begin{Remark}
Similar arguments apply to surfaces in $\PP^3$ of higher degree,
notably to quintics,
as we shall exploit in future work.
\end{Remark}

\subsection*{Proof of Theorem \ref{thm:Q}}

Consider a conic $Q$ on a smooth quartic surface $S\subset\PP^3$.
Construct the total space of lines $Z\subset\PP^3$ as above and regard it as  
a divisor $D$ on $S$, i.e.~in $\OO_S(16)$ by Proposition \ref{prop:Z}.
Let $m>0$ denote the multiplicity of $Q$ in $Z$, i.e. the unique integer such that
$D-mQ$ is effective, but
\[
Q\not\subset \text{supp}(D-mQ).
\]
Since any line in $S$ meeting $Q$ is contained in the support of $D-mQ$ by Proposition \ref{prop:Z},
we obtain the following two upper bounds for their total number:
\begin{eqnarray}
\label{eq:48}
\#\{\text{lines on $S$ meeting } Q\} \leq
\begin{cases}
\deg(D-mQ) = 64-2m\\
Q.(D-mQ) = 32+2m
\end{cases}
\end{eqnarray}
Comparing the two bounds directly gives Theorem \ref{thm:Q}. \qed

\section{112 lines}
\label{s:pf}

In this section we specialise to the case where $k$ is algebraically closed of characteristic $3$.
This assumption makes a big difference for the fibration
\[
\pi: S\to \PP^1
\]
of genus one curves induced by a line $\ell$ on a smooth quartic surface $S\subset\PP^3$
by way of the pencil of hyperplanes containing $\ell$.
Notably, this fibration may become quasi-elliptic,
i.e. the generic fibre may be a cuspidal cubic curve.
This degeneration causes the upper bound for the number of lines incident with $\ell$ to go up from
20 (the bound from \cite{RS} correcting Segre's bound of 18 from \cite{Segre}):

\begin{Lemma}
\label{lem:30}
Depending on the induced fibration $\pi$,
a line on $S$ meets at most
\[
\begin{cases}
24 \text{ lines on $S$} & \text{ if $\pi$ is elliptic},\\
30 \text{ lines on $S$} & \text{ if $\pi$ is quasi-elliptic}.
\end{cases}
\]
\end{Lemma}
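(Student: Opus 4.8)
The plan is to analyse the genus-one fibration $\pi\colon S\to\PP^1$ cut out by the pencil of hyperplanes through $\ell$ and to realise every line meeting $\ell$ as a component of a fibre. Write $h$ for the hyperplane class, so that $h^2=4$ and $h.\ell=1$, and recall that a line on the K3 surface $S$ is a smooth rational curve with $\ell^2=-2$. Each hyperplane $H\supset\ell$ cuts out $H.S=\ell+C_H$ with $C_H$ a residual plane cubic, and the $C_H$ are the members of the pencil $|F|$, $F=h-\ell$. A short computation gives $F^2=h^2-2\,h.\ell+\ell^2=0$ and $\ell.F=h.\ell-\ell^2=3$. After checking that $|F|$ is base point free -- so that $S$ is already the relatively minimal model -- this exhibits $\pi$ as a genus-one fibration with fibres the plane cubics $C_H$, for which $\ell$ is a trisection.

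The decisive reduction is that a line $m\neq\ell$ on $S$ meets $\ell$ if and only if $m$ is a component of a fibre of $\pi$. Indeed $m$ meets $\ell$ precisely when the two are coplanar, i.e.\ when $m\subset\langle\ell,m\rangle.S=\ell+C_{\langle\ell,m\rangle}$, forcing $m$ to be a component of the residual cubic; conversely a line inside a fibre lies in a hyperplane through $\ell$ and hence meets $\ell$. So the number of lines meeting $\ell$ equals the number of line components among the fibres of $\pi$, and since every fibre is a plane cubic it contributes at most three.

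For the elliptic case I would use the Euler number identity $\sum_v e(F_v)=e(S)=24$, the smooth fibres contributing $0$. By Kodaira's classification the number of irreducible components of a fibre is at most its Euler number, with equality only for the multiplicative fibres $I_n$; a fortiori the number of line components of $F_v$ is at most $e(F_v)$. Summing bounds the lines by $24$, the extremal case being eight triangle fibres $I_3$. In the quasi-elliptic case the generic fibre is a cuspidal cubic, hence $e(F_{\mathrm{gen}})=2$, and the Euler number splits as $24=e(\PP^1)\,e(F_{\mathrm{gen}})+\sum_v\bigl(e(F_v)-e(F_{\mathrm{gen}})\bigr)=4+\sum_v(e(F_v)-2)$, so $\sum_v(e(F_v)-2)=20$. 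Now all fibres are additive, and the only additive plane cubics are of type $II$ (cuspidal cubic, no line), $III$ (conic and tangent line, one line) and $IV$ (three concurrent lines); in each the line count is at most $\tfrac32(e(F_v)-2)$, sharply so for $IV$ where $3=\tfrac32\cdot2$. Summing gives at most $\tfrac32\cdot20=30$ lines, attained by ten fibres $IV$.

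I expect the quasi-elliptic step to be the main obstacle: one has to feed the non-smooth generic fibre correctly into the Euler-number balance and then prove the sharp slope-$\tfrac32$ inequality by running through the admissible additive fibre types, type $IV$ being the saturating one. A preliminary hurdle, to be cleared first, is the base point freeness of $|F|$ that pins the fibres down to the plane cubics $C_H$ so that Kodaira's list applies. Finally, the characteristic-$3$ pathologies (wild ramification, possible multiple fibres) enter only through non-negative correction terms that turn the two Euler identities into $\sum_v e(F_v)\le24$ and $\sum_v(e(F_v)-2)\le20$, and hence can only improve the bounds $24$ and $30$.
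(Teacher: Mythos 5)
Your proof is correct and follows essentially the same route as the paper: realise the lines meeting $\ell$ as components of the plane-cubic fibres of $\pi$, then bound their number via the Euler-number identities $\sum_v e(F_v)=24$ (elliptic) and $\sum_v(e(F_v)-2)=20$ (quasi-elliptic), with the extremal configurations eight $I_3$ triangles resp.\ ten $IV$ stars. The only (harmless) deviation is that you allow type $III$ fibres in the quasi-elliptic case, whereas in characteristic $3$ a quasi-elliptic fibration admits only types $II$, $IV$, $IV^*$, $II^*$; since $III$ satisfies your slope-$\tfrac32$ inequality anyway, the bound of $30$ is unaffected.
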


\begin{proof}
We use the Euler-Poincar\'e characteristic
expressed as sum over the contributions from the fibres of $\pi$.
Any line $\ell'\subset S$ meeting $\ell$ appears as component of a reducible singular fibre;
since $S$ is smooth, the possible fibre types are quite limited.
The following table collects the fibre types in Kodaira's notations 
and the local contribution to the Euler-Poincar\'e characteristic:

\begin{table}[ht!]
\begin{tabular}{c|l|c}
fibre type %of $F_v$ 
& configuration of fibre components &  $e(F_v)$\\
\hline
$I_1$ & nodal cubic & $1$\\
$I_2$ & line and conic meeting transversally in 2 points & $2$\\
$I_3$ & 3 lines meeting  in 3 points & $3$\\
$II$ & cuspidal cubic & $2+\delta$\\
$III$ & line and conic meeting tangentially in a point & $3$\\
$IV$ & 3 lines meeting in a single point & $4+\delta$
\end{tabular}
\vspace{.3cm}
\caption{Singular fibres of genus one pencils on smooth quartic surfaces}
\label{tab:F}
\end{table}
%\begin{table}[ht!]
%\begin{tabular}{c|cccc}
%fiber type $F_v$ & $I_2$ & $I_3$ & $III$ & $IV$\\
%\hline
%$e(F_v)$ & 2 & 3 & 3 & 4+$\delta$\\
%\end{tabular}
%\end{table}
Here $\delta$ accounts for the wild ramification (which is zero on a quasi-elliptic fibration and positive 
on an elliptic fibration, see \cite{SS}).

If $\pi$ is an elliptic fibration,
then we have \begin{eqnarray}
\label{eq:ee}
24 = e(S) =   \sum_v e(F_v).
\end{eqnarray}
Comparing with the Table \ref{tab:F},
we find that this accommodates at most 24 lines,
coming in triangles forming $I_3$ fibres.
This gives the first claim of the lemma.

If $\pi$ is a quasi-elliptic fibration,
the general fibre $F$ has $e(F)=2$ instead of zero.
Thus we have
\begin{eqnarray}
\label{eq:eqe}
\;\;\;\;\;
24 = e(S) = e(\PP^1)\cdot e(F) +   \sum_v (e(F_v)-e(F)) = 4 +  \sum_v (e(F_v)-2).
\end{eqnarray}
Moreover,
a quasi-elliptic fibration
only admits fibres of Kodaira types 
\begin{eqnarray}
\label{eq:qe}
II, IV,
IV^*, II^*,
\end{eqnarray}
but the latter two do not appear on fibrations induced by lines on  smooth quartics.
A comparison of \eqref{eq:eqe} with Table 1 %\ref{tab:F}
yields that $\pi$ accommodates exactly 30 lines arranged in 10 stars (type $IV$).
This completes the proof of the lemma.
\end{proof}

\begin{Remark}
We do not expect the bound of 24 lines in the elliptic fibration case to be sharp (just like in \cite{RS}),
but possible improvements are not relevant to the goals of the present work.
\end{Remark}

\subsection*{Proof of Theorem \ref{thm}}

Given a smooth quartic surface $S\subset\PP^3$,
assume that there is some hyperplane $H$ which splits into 4 lines on $S$:
\[
H = \ell_1+\ell_2+\ell_3+\ell_4.
\]
Any other line $\ell'\subset S$ satisfies $\ell'.H=1$, 
so it meets one of the $\ell_i$.
By Lemma \ref{lem:30}, any line meets at most 30 other lines on $S$,
so we obtain
\begin{eqnarray}
\label{eq:112}
\#\{\text{lines on } S\} \leq 4 + 4\cdot (30-3) = 112.
\end{eqnarray}
Next assume that there is no such hyperplane on $S$.
This implies that any line $\ell$ on $S$ meets at most $12$ other lines:
by assumption, any line meeting $\ell$ is part of a unique fibre of Kodaira type $I_2$ or $III$
of the  fibration $\pi$ induced by $\ell$;
since this fibration is elliptic by inspection of the fibre types (compare \eqref{eq:qe}),
equation
\eqref{eq:ee} combined with Table \ref{tab:F} gives the claim.

If there is some hyperplane $H$ splitting off two lines on $S$,
\[
H = \ell_1 + \ell_2 + Q
\]
for some irreducible conic $Q$,
then we find as above 
\[
\#\{\text{lines on } S\} \leq 2 + 2\cdot (12-1) + \#\{\text{lines $\neq \ell_1, \ell_2$ on  $S$ meeting } Q\}\leq 70.
\]
(This improves to an upper bound of $68$ if we take into account in \eqref{eq:48} that $\ell_1.Q=\ell_2.Q=2$, compare the proof of Proposition \ref{prop:2} in Section \ref{s:2}.)

Finally, if all lines on $S$ are skew,
then there cannot be more than 21 of them
since they give mutually orthogonal $(-2)$-classes in $\Pic(S)$,
a lattice of signature $(1,\rho-1)$ with $\rho\leq b_2(S)=22$. 
This completes the proof of Theorem \ref{thm}.
\qed

\begin{Remark}
After this paper was completed,
Noam Elkies pointed out to us
that is might be possible to obtain an alternative proof of Theorem \ref{thm}
from bounds on spherical codes (see \cite{CK}).
Potentially this also applies to the uniqueness proved in the next section.
\end{Remark}

\section{Uniqueness}
\label{s:unique}

We continue to work over an algebraically closed field $k$ of characteristic $3$.
It is immediate that a smooth quartic $S\subset\PP_k^3$ can only contain 112 lines if it 
contains a line inducing a quasi-elliptic fibration.
In fact, a quick inspection of the proof of Theorem \ref{thm} reveals much more than this:
\begin{Fact} 
\label{fact}
On a smooth quartic surface, 
112 lines can only be attained 
if every line induces a quasi-elliptic fibration.
\end{Fact}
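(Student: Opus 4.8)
The plan is to upgrade the bound \eqref{eq:112} from the proof of Theorem \ref{thm} from a single inequality into a rigidity statement, by running the same counting argument through \emph{every} hyperplane that splits completely into lines. First I would note that $112$ lines can only occur in the first case of that proof: in the absence of a hyperplane cutting out four lines, the remaining case analysis caps the number of lines at $70$ (and at $21$ if all lines are skew). Hence there is a hyperplane $H_0$ with $H_0\cap S=\ell_1+\ell_2+\ell_3+\ell_4$. Forcing equality throughout \eqref{eq:112} then pins each $\ell_i$ down to meeting exactly $30$ lines, so by Lemma \ref{lem:30} each of the four coplanar lines induces a quasi-elliptic fibration.

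The crux is to place an arbitrary line $m$ inside a hyperplane splitting into four lines, using only what has already been secured. Any line $m\neq\ell_i$ meets $H_0$ in a single point (since $m.H_0=1$), and that point lies on $H_0\cap S$, hence on one of the $\ell_i$; so $m$ meets some $\ell_i$, say $\ell_1$. The plane $H'$ spanned by $\ell_1$ and $m$ belongs to the pencil defining the fibration induced by $\ell_1$, and $m$ occurs as a component of the residual cubic fibre over $H'$. Since that fibration is quasi-elliptic, its fibres are among those in \eqref{eq:qe}; as the types $IV^*, II^*$ do not arise here and $II$ is irreducible, the fibre over $H'$ containing the line $m$ must be of type $IV$. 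Thus $H'\cap S=\ell_1+m+m'+m''$ splits into four lines, and $m$ lies on a four-line hyperplane.

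It then remains to re-run the counting of \eqref{eq:112} with $H'$ in place of $H_0$. As $S$ still carries exactly $112$ lines, the inequality $112\le 4+4\cdot(30-3)$ is once more an equality, which forces every one of the four lines in $H'$—and in particular $m$—to meet exactly $30$ lines. By Lemma \ref{lem:30} the fibration induced by $m$ is therefore quasi-elliptic. Since $m$ was an arbitrary line, every line on $S$ induces a quasi-elliptic fibration, as asserted in Fact \ref{fact}.

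I expect the middle step to be the only real obstacle: one must produce a four-line hyperplane through an arbitrary $m$ before knowing anything about $m$'s own fibration, and the mechanism for this is precisely the restricted fibre list \eqref{eq:qe} of the already-identified quasi-elliptic lines $\ell_i$. This is what turns the mere incidence ``$m$ meets $\ell_1$'' into membership in a star, and hence in a hyperplane splitting into four lines; once that is available, the conclusion follows by repeating an argument already in hand, with no further computation.
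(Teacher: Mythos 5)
Your argument is correct and is precisely the ``quick inspection of the proof of Theorem \ref{thm}'' that the paper gestures at: equality in \eqref{eq:112} forces each $\ell_i$ to meet $30$ lines (hence be quasi-elliptic by Lemma \ref{lem:30}), and the restricted fibre list \eqref{eq:qe} places any further line in a type $IV$ star, i.e.\ a four-line hyperplane, so the count can be re-run. You have in fact spelled out the propagation step more explicitly than the paper does, but the mechanism is the same.
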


In this section, we will prove that a smooth quartic $S\subset\PP^3_k$ containing 112 lines
is unique up to the action of $\mbox{PGL}(4,k)$.
As a warm-up,
we will sketch how uniqueness can be proved as a K3 surface (disregarding the polarisation)
and as a quasi-elliptic K3 surface (in view of Fact \ref{fact}).

\subsection{Uniqueness as abstract K3 surface}

We know that $S$ contains a line $\ell$ inducing a quasi-elliptic fibration
\begin{eqnarray}
\label{eq:pi}
\pi: S \to \PP^1
\end{eqnarray}
necessarily with 10 reducible fibres of Kodaira type $IV$.
This already implies that $S$ is supersingular, i.e. $\rho(S)=b_S(S)=22$,
in agreement with the fact that $S$ is unirational.
All lines in $S$ disjoint from $\ell$ serve as sections for $\pi$.
The Mordell-Weil group is a finite $3$-elementary abelian group,
i.e.
\[
\MW(S,\pi) \cong (\Z/3\Z)^e \;\;\; 0\leq e\leq 4.
\]
In order to reach the total number of 112 lines,
we need $112-1-30=81$ sections, so $e=4$.
Then the discriminant formula for the N\'eron-Severi lattice of $S$ from \cite[(22)]{SSh} returns
\[
\disc(\NS(S)) = \disc(U+A_2^{10})/(\#\MW(S, \pi))^2 = -3^2.
\]
In other words, $S$ is a supersingular K3 surface of Artin invariant $\sigma=1$.
By Ogus' work \cite{Ogus}, $S$ is unique up to isomorphism.

\subsection{Uniqueness as quasi-elliptic surface}

By Example \ref{ex:Fermat} (compare  \cite{S-sigma=1} for the general case),
$S$ admits a model over $\F_9$ with generators of $\NS(S)$ defined over the same field.
It follows that the same holds true for any elliptic or quasi-elliptic fibration on $S$ 
since any such is given by the linear system associated to a divisor of Kodaira type 
(compare the argument in \cite[\S 9]{ES-2} and the classification in \cite{Sengupta}).
In particular, the reducible fibres of the quasi-elliptic fibration \eqref{eq:pi} (ten times type $IV$)
sit exactly over the points of $\PP^1(\F_9)$.
We compare this with the explicit shape that a quasi-elliptic K3 surface takes:
\begin{eqnarray}
\label{eq:WF}
y^2 = x^3 + f(t),\;\;\; f\in k[t], \; \deg(f)=12.
\end{eqnarray}
By the Jacobi-criterion, the singularities of the Weierstrass model sit exactly in the fibres at the roots
of the formal derivative $f'(t)$. 
Thus we can assume $f'(t)=c(t^9-t)$
which fixes $f$ up to cube powers of $t$ and a scalar multiple.
The former can be eliminated by a translation in $x$,
the latter by rescaling $(x,y)$ over $\bar\F_9$.
In consequence,
$S$ is given as a quasi-elliptic K3 surface
by the Weierstrass form
\[
S:\;\;\; y^2 = x^3 + t^{10} + t^2.
\]

\subsection{Uniqueness as  quartic surface}

In this section, we will finally prove the uniqueness of $S$ as a polarised K3 surface,
i.e. as a  quartic in $\PP^3$.
As before, we phrase our results over an algebraically closed field $k$ of characteristic $3$:

\begin{Proposition}
\label{prop:4}
Up to the action of $\mbox{PGL}(4)$
there is a unique smooth quartic surface in $\PP^3$ containing 112 lines.
\end{Proposition}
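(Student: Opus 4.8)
The plan is to build on the two warm-up reductions: we already know that $S$ is abstractly the supersingular K3 surface of Artin invariant $1$ in characteristic $3$, and that as a quasi-elliptic surface it is given by the Weierstrass form $y^2=x^3+t^{10}+t^2$. What remains is to pin down the polarisation, i.e.\ to show that the class $h\in\NS(S)$ with $h^2=4$ giving the embedding $S\hookrightarrow\PP^3$ is essentially unique, and then to translate this back into a single projective equivalence class of quartics. First I would fix the line $\ell$ inducing the quasi-elliptic fibration $\pi$ and record the full intersection configuration: the fibre components of the ten type-$IV$ fibres, the zero section, and the $81$ sections coming from $\MW(S,\pi)\cong(\Z/3\Z)^4$. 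Since $\NS(S)$ is generated over $\F_9$ (as noted in the quasi-elliptic uniqueness step), I would express the hyperplane class $h$ in terms of $\ell$, a general fibre $F$, and the trivial lattice plus torsion sections.

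The key computation is to determine which divisor classes $h$ with $h^2=4$ can serve as the quartic polarisation, subject to the constraint imposed by Fact~\ref{fact}: \emph{every} line on the $112$-line surface must induce a quasi-elliptic fibration. A line $\ell'$ on $S$ is a smooth rational curve with $(\ell')^2=-2$ and $h.\ell'=1$; the induced pencil $\pi'$ is quasi-elliptic precisely when its generic fibre is a cuspidal cubic, which by the classification of quasi-elliptic fibrations in characteristic $3$ forces $\ell'$ to meet the appropriate combinatorial pattern of the $\nabla$-configuration. I would use the lattice structure of $\NS(S)=U\oplus A_2^{10}$ modified by the torsion (discriminant $-9$) to list the $(-2)$-classes $\ell'$ with $h.\ell'=1$, check that there are exactly $112$ of them, and verify that the quasi-elliptic condition together with $h^2=4$ determines $h$ up to the action of $\Aut(S)$ on $\NS(S)$. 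This is where I expect the main obstacle to lie: one must show that the arithmetic of the lattice, combined with the rigidity of Fact~\ref{fact}, leaves no room for an inequivalent polarisation, and that automorphisms of the lattice that fix $h$ are realised by actual automorphisms of $S$ (invoking the Torelli-type and surjectivity results of Ogus for supersingular K3 surfaces).

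Once $h$ is shown to be unique up to $\Aut(S)$, the final step is to pass from the abstract polarised K3 surface $(S,h)$ to an honest quartic in $\PP^3$. The complete linear system $|h|$ gives a morphism $S\to\PP^3$ whose image is a quartic surface, and since $h^2=4$ and $h$ is base-point free (which I would check by confirming $|h|$ has no fixed component and separates points, using $h.\ell'=1$ for the $112$ lines), this morphism is a closed embedding onto a smooth quartic. Two such embeddings arising from projectively equivalent choices of basis of $H^0(S,\OO_S(h))$ differ by an element of $\mbox{PGL}(4,k)$, so the uniqueness of $(S,h)$ up to isomorphism yields uniqueness of the quartic up to $\mbox{PGL}(4)$. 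A clean way to conclude is to exhibit the Fermat quartic $S_3$ of Example~\ref{ex:Fermat} as a surface realising this configuration---$112$ lines, each inducing a quasi-elliptic fibration---so that any $112$-line smooth quartic is projectively equivalent to $S_3$, completing the proof of Proposition~\ref{prop:4}.
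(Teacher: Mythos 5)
Your plan is a genuinely different route from the paper's, but as written it has a real gap at its centre. The step you yourself flag as ``where I expect the main obstacle to lie'' --- showing that the quasi-elliptic condition and $h^2=4$ determine the polarisation class $h$ up to $\Aut(S)$, and that every relevant isometry of $\NS(S)$ is realised by an actual automorphism of $S$ --- is precisely the content of the proposition, and you do not carry it out. Worse, the tool you propose to invoke for the second half, Ogus' crystalline Torelli theorem and its surjectivity counterpart, is established in his work only for $p\geq 5$; the paper's citation of \cite{Ogus} is to the weaker statement (uniqueness of the Artin invariant $1$ surface up to abstract isomorphism), which does not give you control of $\Aut(S)$ or of orbits of polarisations in characteristic $3$. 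Without that input, the enumeration of $(-2)$-classes $\ell'$ with $h.\ell'=1$ in the rank-$22$ lattice of discriminant $-3^2$ and the orbit computation for degree-$4$ polarisations is a substantial open-ended task, not a routine verification. There is also a smaller slip: exhibiting the Fermat quartic as one realisation of the configuration does not by itself let you conclude that ``any $112$-line smooth quartic is projectively equivalent to $S_3$''; that only follows once the uniqueness of $(S,h)$ has already been proved.

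The paper avoids all of this by working directly with the defining polynomial. The key geometric input is Lemma \ref{lem:triple}: since every line on a $112$-line quartic induces a quasi-elliptic fibration (Fact \ref{fact}), each such line meets every fibre of its fibration with multiplicity three at the singular point. Normalising $\ell=\{x_3=x_4=0\}$ and two type $IV$ fibres, this condition --- imposed for $\ell$ and then for two further lines $\ell_1,\ell_2$ via the vanishing of restricted partial derivatives of the residual cubics --- cuts the coefficients of $f$ down step by step until the remaining freedom is absorbed by explicit translations, yielding a single normal form. That argument is elementary and entirely characteristic-$3$-safe, at the cost of being computational; your lattice-theoretic approach would be more conceptual and would generalise better, but it needs a Torelli-type theorem in characteristic $3$ that you cannot currently cite, plus the explicit orbit analysis you have only sketched.
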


Before starting the proof, we record a simple, but crucial observation:

\begin{Lemma}
\label{lem:triple}
Let $\ell\subset S$ with induced quasi-elliptic fibration $\pi: S\to \PP^1$.
Then $\ell$ meets every fibre of $\pi$ with multiplicity three at its singular point.
\end{Lemma}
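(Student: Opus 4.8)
The plan is to analyze how the line $\ell$ interacts with a single fibre $F$ of the quasi-elliptic fibration $\pi$. Since $\ell$ induces $\pi$ via the pencil of hyperplanes through $\ell$, every hyperplane $H$ in this pencil contains $\ell$ and cuts out $F = H\cap S$ as a plane quartic curve. But $\ell\subset H\cap S$, so $\ell$ is a line component of this degree-four plane section, and the residual curve $F - \ell$ is a plane cubic. Thus the generic fibre of $\pi$, as a genus-one curve, is precisely this residual cubic $C := F - \ell$, and the intersection $\ell\cap C$ consists of exactly the points where $\ell$ meets the fibre. The first step is therefore to pin down the geometry: $\ell.F = \ell.(H\cap S)$ must have total degree three (since $\ell.H = 1$ counts against $F$ in $\Pic(S)$, giving $\ell.C = \ell.F - \ell^2$ with $\ell^2 = -2$), so I want to show these three intersection points collapse to a single point, namely the cusp of $C$.

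The key input is that $\pi$ is quasi-elliptic, so by definition its generic fibre is a cuspidal cubic, and every smooth fibre carries a unique singular (cusp) point; these cusps sweep out the curve of cusps, which is a section-like locus mapping isomorphically to $\PP^1$. The essential point I would establish is that the line $\ell$ must be tangent to $C$ at its cusp with maximal contact, i.e. $\ell$ passes through the cusp of every fibre and meets $C$ there with multiplicity three. Here is the reasoning I would pursue: the residual cubic $C$ meets $\ell$ in a divisor of degree three on $\ell\cong\PP^1$; as the hyperplane $H$ varies in the pencil, these intersection points vary, but they are constrained because $\ell$ itself is common to all fibres. The cleanest route is to note that $\ell$, being a section-complement (a component of every fibre), corresponds under the quasi-elliptic structure to the distinguished cusp locus — indeed a line meeting every fibre of a quasi-elliptic pencil in three coincident points is forced to be the curve of cusps up to the contribution of $\ell$.

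Concretely, I would argue as follows. Consider the scheme-theoretic intersection $\ell\cap F$ for a general fibre. If the three intersection points were distinct (or split as $2+1$), then the plane $H$ would meet $S$ in a configuration forcing the residual cubic $C$ to meet $\ell$ transversally or tangentially at smooth points of $C$; but then varying $H$ the point of intersection would trace a subscheme of $\ell$ of the wrong degree, or the generic fibre would fail to be cuspidal along $\ell$. The decisive observation is that in a quasi-elliptic fibration in characteristic $3$ all cusps lie on a single curve, and since $\ell$ is a fibre component meeting the smooth part, the only way for $\ell$ to sit inside every fibre while the residual cubics stay cuspidal of moving cusp is for $\ell$ to pass through each cusp; a local computation at the cusp (where the cuspidal cubic has a point of multiplicity two and the tangent cone is a double line) then shows that a line through the cusp lying in the plane $H$ meets $C$ there with multiplicity at least two, and the remaining residual intersection degree one must also concentrate at the cusp by the genus-one constraint, yielding multiplicity exactly three.

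The main obstacle I anticipate is ruling out the intermediate possibility that $\ell$ meets $F$ with multiplicities $2+1$ (tangent at the cusp but crossing $C$ once more at a smooth point), rather than the full triple contact claimed. Handling this requires a careful local analysis at the cusp in characteristic $3$, using the explicit Weierstrass-type normal form $y^2 = x^3 + f(t)$ from \eqref{eq:WF} and the fact that the cusp sits at the point where the tangent cone degenerates; the special feature of characteristic $3$ (where $3x^2 = 0$, so the cubic $x^3$ has a genuinely degenerate differential) is exactly what forces the extra order of contact and eliminates the $2+1$ case. I expect the resolution to come from the observation that the multiplicity-three hypothesis is equivalent to $\ell$ being contained in the Hessian quadric $V_P$ of Section \ref{s:Hess} at the cusp $P$, which holds automatically by \eqref{eq:V_P} since $\ell\subset S$ meets every fibre at $P$, thereby forcing contact at least three and, combined with the degree bound $\ell.C = 3$, exactly three.
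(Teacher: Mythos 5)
Your setup is on target --- the fibres of $\pi$ are the residual plane cubics $C=H\cap S-\ell$, the line $\ell$ is a $3$-section with $\ell.C=3$, and the task is to show that this intersection number concentrates in the singular point of every fibre --- but the proposal never actually establishes the concentration. Each step that should do the work is an assertion rather than an argument: the claim that ``the only way for $\ell$ to sit inside every fibre \dots is for $\ell$ to pass through each cusp'' is precisely the statement to be proved (and note that $\ell$ is \emph{not} a component of any fibre of $\pi$, only of the plane sections $H\cap S$); the ``genus-one constraint'' forcing the residual degree-one intersection to concentrate at the cusp does not exist --- a line through the cusp of a cuspidal plane cubic meets it with multiplicity exactly $2$ there unless it is the cuspidal tangent, so the $2+1$ pattern you worry about is a genuine possibility that your local analysis does not exclude; and the final appeal to \eqref{eq:V_P} is circular (it presupposes $P\in\ell$) and misdirected, since $V_P$ governs the contact of lines with $S$, not with the plane cubic $C$.

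The missing ingredients are global, and they are exactly what the paper uses. First, the lemma is proved in the situation where $S$ carries $112$ lines, so Fact \ref{fact} applies: \emph{every} line on $S$ induces a quasi-elliptic fibration. The fibration $\pi$ has exactly ten reducible fibres, all of type $IV$ (three concurrent lines). If $\ell$ missed the concurrency point of one of them, it would meet that fibre in three distinct points, and then $\ell$ together with two of the components would form a triangle, i.e.\ an $I_3$ fibre of the fibration induced by the third component --- impossible for a quasi-elliptic fibration by \eqref{eq:qe}. Hence $\ell$ meets each of the ten reducible fibres in a single point. Second, Riemann--Hurwitz for the degree-$3$ morphism $\pi|_\ell\colon\ell\to\PP^1$: if this map were separable it would admit at most $4$ ramification points, contradicting the ten just produced; so it is purely inseparable, $\ell$ meets \emph{every} fibre in a single point of multiplicity $3$, and this intersection pattern forces that point to be the singular point of the fibre. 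Your proposal uses neither the $112$-line hypothesis nor the reducible fibres, and without these inputs I do not see how the conclusion can be reached.
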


\begin{proof}
The intersection pattern of the lemma
is the only possibility for $\ell$ to meet every fibre in a single point.
Assuming that this is not the case,
 $\ell$ induces a morphism
\[
\ell\to\PP^1
\]
of degree $2$ or $3$ and thus with at most 4 ramification points.
Therefore it suffices to prove the claim for reducible fibres since there are 10 in number.

Assume that $\ell$ intersects the components of a $IV$ fibre
in three distinct points.
Pick any component $\ell'$ and consider the induced fibration $\pi'$.
Then $\ell$ and the other two components form a triangle,
i.e. a fibre of Kodaira type $I_3$ for $\pi'$.
By \eqref{eq:qe}, $\pi'$ cannot be quasi-elliptic, contradicting Fact \ref{fact}.
\end{proof}

\begin{proof}[Proof of Proposition \ref{prop:4}]
After a linear transformation, we can assume that the smooth quartic $S$
contains the line 
\[
\ell=\{x_3=x_4=0\}\subset\PP^3.
\]
Moreover we can ensure in the same way that two of the reducible fibres (type $IV$, cf.~Fact \ref{fact})
are located at $0, \infty$ (i.e.~$x_3=0$ resp.~$x_4=0$),
and that some fibre component is given by $x_1=0$ resp.~$x_2=0$.
Then by Lemma \ref{lem:triple} all fibre components intersect at $[0,1,0,0]$ resp.~$[1,0,0,0]$.
This limits the defining polynomial $f$ of $S$ to the following shape
\[
f = x_3 x_2 (x_2-x_3)(x_2+ax_3) + x_3x_4g(x_1,x_2,x_3,x_4) + x_4x_1(x_1-x_4)(x_1+bx_4)
\]
where $g$ is homogeneous of degree $2$ and we have rescaled coordinates to normalise two further coefficients.
One verifies directly that $\ell$ intersects the residual cubic $F_t$
in the fibre at $x_4=tx_3$ only at the point given by
\begin{eqnarray}
\label{eq:pt}
x_2^3+tx_1^3=0.
\end{eqnarray}
By Lemma \ref{lem:triple}
this ought to define a singular point on every fibre.
We compare this with the partial derivates of the residual cubic $h_t$ defining $F_t\subset\PP^2_{k(t)}$ 
which we restrict to $\ell$:
\[
\frac{\partial h_t}{\partial x_1}\mid_\ell \equiv \frac{\partial h_t}{\partial x_2}\mid_\ell \equiv 0,\;\;\; 
\frac{\partial h_t}{\partial x_3}\mid_\ell = (a-1)x_2^2+tg(x_1,x_2,0,0)+(b-1)t^2x_1^2.
\]
For degree reasons, the last partial derivative may only vanish at the intersection point from \eqref{eq:pt} for all $t$
if it vanishes identically:
\begin{eqnarray}
\label{eq1}
a=b=1, \;\; g\in(x_3,x_4).
\end{eqnarray}
We continue to argue analogously with the lines 
\[
\ell_1=\{x_1=x_3=0\},\;\;\; \ell_2=\{x_2=x_3=0\}
\]
and the induced genus one fibrations. The line $\ell_1$ meets the residual cubics in a single point
(given by $x_2^3+(t+c)x_4^3=0$ for some constant $c\in k$) if and only if
\begin{eqnarray}
\label{eq2}
g\in (x_1,x_3)+(x_4^2).
\end{eqnarray}
The analogous condition for $\ell_2$ leads to
\begin{eqnarray}
\label{eq3}
g\in (x_2,x_4)+(x_3^2).
\end{eqnarray}
In total, \eqref{eq1} - \eqref{eq3} limit the polynomial $g$ to the following shape:
\[
g=c_1x_1x_4+c_2x_2x_3+c_3x_3^2+c_4x_4^2+c_5x_3x_4,\;\;\; c_i\in k.
\]
Solving for the threefold intersection point to be a singularity of the residual cubic directly gives
\[
c_1=c_2=c_5=0.
\]
That is, $S$ is given by the polynomial
\[
f=x_3 x_2 (x_2^2-x_3^2) + x_3x_4(c_3x_3^2+c_4x_4^2) + x_4x_1(x_1^2-x_4^2).
\]
As the final twist, the above polynomial's shape is preserved by the translations
\[
x_1 \mapsto x_1+\alpha x_3, \;\; x_2\mapsto x_2+\beta x_4.
\]
These can be used to eliminate the coefficients $c_3, c_4$ from $f$ as well.
In conclusion, we have applied suitable linear transformations to reach a unique defining quartic polynomial for $S$.
\end{proof}

\begin{Remark}
One easily verifies that the resulting quartic is indeed isomorphic to the Fermat quartic from Example \ref{ex:Fermat} 
(over $\F_9)$, or to the reduction of a suitable model of Schur's quartic \cite{schur} over $\Q(\sqrt{-3})$.
\end{Remark}

\section{Characteristic 2}
\label{s:2}

This section sketches a proof of Proposition \ref{prop:2}.
We work over an algebraically closed field $k$ of characteristic $2$.
Since we do not expect the bound to be sharp (but also lack good examples),
we will give only the rough ideas for brevity.
Overall, one proceeds as for characteristic $3$ in Section \ref{s:pf}.

\begin{Lemma}
\label{lem:20}
A line on a smooth quartic surface $S\subset\PP^3_k$ meets at most
20 other lines on $S$.
\end{Lemma}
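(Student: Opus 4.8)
The plan is to mimic the Euler--Poincar\'e characteristic argument of Lemma \ref{lem:30}, but now accounting for the specific degenerations that occur in characteristic $2$.  First I would fix a line $\ell\subset S$ and the induced genus one fibration $\pi:S\to\PP^1$ coming from the pencil of hyperplanes through $\ell$.  Every other line meeting $\ell$ appears as a component of a reducible fibre, so the task is to bound the total number of such line-components using $e(S)=24$.  As in characteristic $3$, the key is that $\pi$ may degenerate: in characteristic $2$ a genus one fibration can become quasi-elliptic, and the admissible Kodaira fibre types are more restricted than in the classical case.  I would therefore begin by assembling the analogue of Table \ref{tab:F}, listing the possible singular fibres (and their Euler numbers, including a wild ramification term $\delta$) that can arise from hyperplane sections of a smooth quartic.

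Next I would split into the elliptic and quasi-elliptic cases.  In the elliptic case, the computation $24=e(S)=\sum_v e(F_v)$ together with the fibre table bounds the number of line-components exactly as before.  In the quasi-elliptic case, the generic fibre is a cuspidal cubic with $e(F)=2$, so one uses instead $24=4\cdot 2+\sum_v(e(F_v)-2)=8+\sum_v(e(F_v)-2)$, and a quasi-elliptic fibration in characteristic $2$ only admits fibres of types $II,III,I_0^*,\ldots,III^*,II^*$ (with those too large to occur on a quartic ruled out).  The arithmetic will then cap the number of lines meeting $\ell$, and I expect the quasi-elliptic case to be the one producing the extremal count, just as it did in characteristic $3$.

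The main obstacle will be getting the fibre classification right and tight in characteristic $2$.  The additive-type fibres behave differently here than in characteristic $3$: the list of Kodaira types realizable on a quasi-elliptic surface is governed by the characteristic, and one must correctly determine which of these can actually appear as a hyperplane section of a \emph{smooth} quartic (degree and smoothness constraints rule out the largest types, as in \eqref{eq:qe}).  I would also need to verify that the wild ramification contributions $\delta$ enter with the correct sign so that the bound is not accidentally loosened.  Once the admissible fibre types and their Euler numbers are pinned down, the bound of $20$ should drop out of the same Euler-characteristic bookkeeping, and the subsequent global argument (splitting into the cases of a hyperplane meeting $S$ in four lines, in two lines and a conic, or in configurations with no such splitting, mirroring the proof of Theorem \ref{thm}) will yield the stated $84$.
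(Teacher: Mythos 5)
There is a genuine gap in the elliptic case. You propose to handle it by the same Euler--Poincar\'e bookkeeping as in Lemma \ref{lem:30}, but that computation only yields the bound $24$ (eight fibres of type $I_3$, each a triangle of lines), not $20$. The figure $20$ in the elliptic case is \emph{not} obtained from $e(S)=24$ and the fibre table; the paper gets it by importing the much more elaborate machinery of \cite{RS} (the arguments of \S\S 3--4 and Lemma 5.2 there, leading to Corollary 5.3), which analyses lines of the second kind, the ramification of $\pi|_\ell$, and a suitably modified Hessian, and then checking that these arguments survive reduction modulo $2$. Without that input your elliptic bound stays at $24$, and the global count in Proposition \ref{prop:2} degrades accordingly (e.g.\ $4+4\cdot(24-3)=88$ in the four-lines case), so the stated bound of $84$ does not follow.

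Two smaller points. First, in the quasi-elliptic case your identity $24=4\cdot 2+\sum_v(e(F_v)-2)=8+\sum_v(e(F_v)-2)$ miscounts: the correct formula is $24=e(\PP^1)\cdot e(F)+\sum_v(e(F_v)-e(F))=4+\sum_v(e(F_v)-2)$, giving $\sum_v(e(F_v)-2)=20$ and hence exactly $20$ fibres of type $III$ (each contributing one line); your version would give $16$, which is too strong and inconsistent with the claimed bound. Second, your instinct about which fibre types to admit is essentially right --- in characteristic $2$ a quasi-elliptic fibration only carries fibres of types $II$, $III$, $I_n^*$, $III^*$, $II^*$, and smoothness of the quartic excludes the last three --- so the quasi-elliptic half of your argument is correct once the arithmetic is fixed; it is the elliptic half that needs a genuinely different (and harder) argument.
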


\begin{proof}[Idea of proof]
Let $\ell$ denote the line and $\pi: S\to\PP^1$ the induced genus one fibration.
If $\pi$ is elliptic,
then  the arguments in \S 3, 4 and Lemma 5.2 of \cite{RS}, 
leading to Corollary 5.3 in loc.~cit.~with the corresponding bound for characteristics $\neq 2,3$, can be applied 
after some mild modifications. (For instance, the Hessian has to be manipulated to work modulo $2$.)

If $\pi$ is quasi-elliptic,
then it may a priori admit fibres of Kodaira types $II, III,$ 
$I_n^*, III^*, II^*$.
Presently the latter three are ruled out by the smoothness of $S$.
Hence by \eqref{eq:eqe} there will be exactly 20 fibres of type $III$,
each containing a line meeting $\ell$.
\end{proof}

\subsection*{Proof of Proposition \ref{prop:2}}

The proof closely follows the line of the arguments of Theorem \ref{thm} in Section \ref{s:pf}.
If there is some hyperplane $H$ which splits into 4 lines on $S$,
\[
H = \ell_1+\ell_2+\ell_3+\ell_4,
\]
then Lemma \ref{lem:20} gives 
\[
\#\{\text{lines on } S\} \leq 4 + 4\cdot (20-3) = 72.
\]
If there is some hyperplane $H$ splitting into two lines and an irreducible conic  on $S$,
\[
H = \ell_1 + \ell_2 + Q
\]
then we find 
\[
\#\{\text{lines on } S\} \leq 2 + 2\cdot (20-1) + \#\{\text{lines $\neq \ell_1, \ell_2$ on  $S$ meeting } Q\}\leq 84.
\]
Here we improve on the bound for $Q$ from Theorem \ref{thm:Q} from 48 lines to 46 lines
reasoning with the surface $Z$ from Proposition \ref{prop:Z} as follows.
Consider the divisor $D=Z\cap S\in\OO_S(16)$.
If $m$ denotes the multiplicity of $Q$ in $D$,
then $\tilde D = D-mQ-\ell_1-\ell_2$ is effective without containing $Q$ in its support.
Hence
\begin{eqnarray}
\label{eq:48}
\#\{\text{lines $\neq \ell_1, \ell_2$ on $S$ meeting } Q\} \leq
\begin{cases}
\deg(\tilde D) = 62-2m,\\
Q.\tilde D = 28+2m.
\end{cases}
\end{eqnarray}
Comparing the two bounds yields $ \#\{\text{lines $\neq \ell_1, \ell_2$ on  $S$ meeting } Q\}\leq 44$.

Finally, if all lines on $S$ are skew,
then there are at most 21 of them as before.
This completes (the sketch of) the proof of Proposition \ref{prop:2}.
\qed

\subsection*{Acknowledgements}

%We are indebted to Wolf Barth for sharing his insights on the subject
%starting more than 10 years ago.
%Thanks to Achill Sch\"urmann for helpful discussions on quadratic forms.
We are grateful to Noam D.~Elkies and the anonymous referee 
for their valuable comments.
%This project was started in March 2011 when MS enjoyed the hospitality of the Jagiellonian University in Krakow.
%Special thanks to S\l awomir Cynk.

\end{document}